\numberwithin{equation}{section}
\numberwithin{figure}{section}
\theoremstyle{plain}
\newtheorem{thm}{\protect\theoremname}
  \theoremstyle{plain}
  \newtheorem{lem}[thm]{\protect\lemmaname}
  \providecommand{\lemmaname}{Lemma}
\providecommand{\theoremname}{Theorem}
\begin{document}

\title{Local estimates for positive solutions of porous medium equations}

\author{Zhongmin Qian}

\address{Mathematical Institute, University of Oxford, Oxford OX2 6GG, England.}
\email{qianz@maths.ox.ac.uk}

\thanks{The first author would like to thank the support of an ERC grant.}

\author{Zichen Zhang}

\address{Mathematical Institute, University of Oxford, Oxford OX2 6GG, England.}
\email{zhangz@maths.ox.ac.uk}

\subjclass[2000]{Primary 58J35; Secondary 35K58}

\keywords{Porous medium equation; Fast diffusion equation; Curvature-dimension condition; Aronson-B{\'e}nilan estimate}

\begin{abstract}
We derive several new gradient estimates of Aronson-B{\'e}nilan type for
positive solutions of porous medium equation and fast diffusion equation
on a complete manifold that satisfies the curvature dimension condition. 
\end{abstract}
\maketitle

\section{Introduction}

The porous medium equation 
\begin{equation}
u_{t}=\Delta u^{m}\label{eqc-1}
\end{equation}
 where $m\in\left(0,\infty\right)$, has been studied firstly as useful
models of describing the physical processes of liquid flows passing
through a concrete medium. If the physical medium is not homogenous,
then one may model the physical processes in terms of porous medium
equations in a manifold. The curvature effect of the manifold will
have an impact on the fluid flow through the non-homogenous medium.
On the other hand, the porous medium equation is an archetypical example
for the study of degenerate parabolic equations, and there is a large
number of papers dealing with different aspects of the porous medium
equations, see for example J. L. V{\'a}zquez's monographs \cite{MR2282669}
\cite{MR2286292}.

For a general parameter $m>0$, only positive solutions to the porous
medium equation are interesting, so in this paper we aim to establish
a priori estimates for positive solutions. Let $u$ be a positive
solution to (\ref{eqc-1}), which may be rewritten as the following
\begin{equation}
u_{t}=mu^{m-1}\Delta u+m\left(m-1\right)u^{m-2}\left|\nabla u\right|^{2}\text{.}\label{epc2}
\end{equation}
Observe, in the non-linear case i.e. $m\neq1$, the diffusion coefficient
in (\ref{epc2}), $mu^{m-1}$, depends on the solution $u$. The degeneracy
of the diffusion part in (\ref{epc2}) happens if $\inf u=0$ when
$m>1$ and $\sup u=\infty$ when $m<1$. If $u$ is bounded away from
$0$ and bounded from above, the differential operator $mu^{m-1}\Delta$
is uniformly elliptic. Only in this case, the theory of fully non-linear
equations applies, and the regularity of the solutions follows from
Krylov-Safonov's theory \cite{MR1814364}.

From this point of view, it is an interesting problem to derive, a
priori space derivative estimates for a positive solution $u$ in
terms of its time derivative, the time parameter $t$, and, if it
is possible, the solution $u$ only. Otherwise we wish to establish
estimates depending on its time derivative, time parameter $t$ and
$\sup u$ for the case $m>1$ (resp. $\inf u$ for the case $m<1$).

In a seminal short note \cite{MR524760}, Aronson and B{\'e}nilan established
a powerful estimate, now under the name of gradient estimate, for
a positive solution $u$ of the porous medium equation on $\mathbb{R}^{n}$.
Explicitly, if $m>\frac{n-2}{n}$, $v=f+\frac{m}{m-1}$, then 
\begin{equation}
-\Delta f\leq\frac{N}{2t}\label{epc3}
\end{equation}
 where 
\begin{equation}
f=m\frac{u^{m-1}-1}{m-1}\text{, \ }\frac{2}{N}=\frac{2}{n}+\left(m-1\right)\text{.}\label{epc4}
\end{equation}
 (\ref{epc4}) is equivalent to the following type gradient estimate:
\begin{equation}
\frac{\left|\nabla f\right|^{2}}{\left(m-1\right)f+m}-\frac{f_{t}}{\left(m-1\right)f+m}\leq\frac{N}{2t}\text{.}\label{epc5}
\end{equation}

The striking feature of (\ref{epc5}) lies in the fact that the estimate
is independent of any bounds of $u$, and therefore is useful in addressing
the regularity of the solutions. This fundamental estimate was later
employed in \cite{MR726106} for the study of existence theory, for
obtaining $L_{loc}^{\infty}\left(\mathbb{R}^{n}\right)$ estimates
in \cite{MR797051}, and for obtaining regularity results for the
free boundary of solutions in \cite{MR891781}.

The story of the porous medium equations in a non-homogenous medium
began with the fundamental paper \cite{MR834612} in 1986 by P. Li
and S. T. Yau, in which they derived the celebrated gradient estimate
for the linear heat equation on manifold, i.e. the case $m=1$.

Suppose that $u$ is a positive solution to the heat equation $u_{t}=\Delta u$
in $\left(0,\infty\right)\times M$, where $M$ is a complete manifold
of dimension $n$, $\Delta$ is the Laplace-Beltrami operator associated
with a complete metric $\left(g_{ij}\right)$ (which accounts for
the non-homogenous continued medium). In a seminal paper \cite{MR834612},
Li and Yau showed that, if the Ricci curvature $R_{ij}$ of $\left(g_{ij}\right)$
is non-negative (for an explanation of the Ricci curvature, see below)
\begin{equation}
\left|\nabla f\right|^{2}-f_{t}\leq\frac{n}{2t}\label{epc6}
\end{equation}
 where 
\[
f=\lim_{m\rightarrow1}m\frac{u^{m-1}-1}{m-1}=\ln u
\]
 is the Hopf transformation of $u$. In fact, they established, under
the condition that $R_{ij}\geq-K$ for some constant $K\geq0$, the
following estimate 
\begin{equation}
\left|\nabla f\right|^{2}-\alpha f_{t}\leq\frac{n\alpha^{2}}{2t}+\frac{n\alpha^{2}K}{2\left(\alpha-1\right)},\ \forall\alpha>1.\label{epc7}
\end{equation}
Here, the significant feature is that the quantity $\left|\nabla f\right|^{2}-\alpha f_{t}$
is controlled by the curvature bound, but independent of any bounds
of the solution $u$ itself, which is one of the special feature of
the linear case. The ellipticity of the operator $\Delta$ is somehow
dominated by the Ricci tensor, but independent of the solution $u$.

There are attempts to extend the Aronson-B{\'e}nilan estimate to solutions
of porous medium equations on manifolds, or equivalently extend Li-Yau's
estimate for non-linear parabolic equations in the presence of curvature.

In \cite{MR2487898}, P. Lu, L. Ni, J. L. V{\'a}zquez and C. Villani studied
the porous medium equation on a complete manifold, and established
a gradient estimate in terms of the superemum of the solution for
the case that $m>1$. Explicitly, let $B\left(\mathcal{O},2R\right)$
denote a geodesic ball with centre $\mathcal{O}$ and radius $2R>0$.
Assume that $u$ is a positive solution to (\ref{eqc-1}) on $B\left(\mathcal{O},2R\right)\times\left[0,T\right]$
and that the Ricci curvature $R_{ij}\geq-K^{2}$ on $B\left(\mathcal{O},2R\right)$
for some $K\geq0$. Let $v=\frac{m}{m-1}u^{m-1}$. If $m>1$ and $\beta>1$,
it holds on $B\left(\mathcal{O},R\right)\times\left[0,T\right]$ that
\[
\frac{\left|\nabla v\right|^{2}}{v}-\beta\frac{v_{t}}{v}\leq\alpha\beta^{2}\left(\frac{1}{t}+C_{2}K^{2}v_{\max}^{2R,T}\right)+\alpha\beta^{2}\frac{v_{\max}^{2R,T}}{R^{2}}C_{1}
\]
 where 
\[
v_{\max}^{2R,T}=\max_{B\left(\mathcal{O},2R\right)\times\left[0,T\right]}v.
\]
 If $m\in\left(1-\frac{2}{n},1\right)$, they proved that on $B\left(\mathcal{O},R\right)\times\left[0,T\right]$,
for any $\gamma\in\left(0,1\right)$, 
\[
\frac{\left|\nabla v\right|^{2}}{v}-\gamma\frac{v_{t}}{v}\geq\frac{\alpha\gamma^{2}}{C_{3}}\left(\frac{1}{t}+C_{4}\sqrt{C_{3}}K^{2}\bar{v}_{\max}^{2R,T}\right)+\frac{\alpha\gamma^{2}}{C_{3}}\frac{\bar{v}_{\max}^{2R,T}}{R^{2}}C_{5}
\]
 with 
\[
\bar{v}_{\max}^{2R,T}=\max_{B\left(\mathcal{O},2R\right)\times\left[0,T\right]}\left(-v\right)\text{.}
\]

In \cite{huang2011gradient}, G. Huang, Z. Huang and H. Li obtained
similar estimates. Note that these gradient bounds depend on the bounds
of the solution $u$, and therefore are not local.

S.T. Yau \cite{MR1305712} established a similar type of gradient
bounds depending on derivatives of initial data for degenerate parabolic
equations of the form 
\[
u_{t}=\Delta\left(F\left(u\right)\right)
\]
 with $F\in C^{2}\left(0,\infty\right)$ and $F^{\prime}>0$. In particular,
as explained in \cite{MR2392508}, Yau's result implies that for any
function $c\left(t\right)\in C^{1}\left(0,\infty\right)$ satisfying
\[
\begin{cases}
c\left(t\right)\leq0\\
c^{\prime}\left(t\right)\geq0\\
\left|\nabla v\right|^{2}-2v_{t}+2m\left(\frac{m-1}{m}v\right)^{\frac{m-2}{m-1}}\leq c\left(t\right) & \mbox{at }t=0
\end{cases}
\]
 it holds for all $t>0$ that 
\[
\left|\nabla v\right|^{2}-2v_{t}+2m\left(\frac{m-1}{m}v\right)^{\frac{m-2}{m-1}}\leq c\left(t\right).
\]
 The gradient estimate of Yau therefore depends on the initial data
$u$.

In this paper, we consider a positive solution $u$ of the equation
\begin{equation}
u_{t}=Lu^{m}\text{ in }\left[0,\infty\right)\times M\label{eq:poro-e1}
\end{equation}
 where $m>0$ is a real exponent, and $L=\Delta+V$ with some smooth
vector field $V$. The Bakry-mery curvature-dimension condition $CD\left(n,-K\right)$
is assumed to hold for $L$, where $K\geq0$ and $m\neq1$. (See below
for an explanation of $CD\left(n,-K\right)$.)

One of the contributions in the present paper is the gradient estimate
for porous medium equation on a complete manifold, which is independent
of the bounds of $u$, for the case $m<1$. Our estimate is a generalization
of Aronson-B{\'e}nilan\textquoteright{}s estimate and Li-Yau's estimate.
\begin{thm}
\label{th-a} Let $M$ be a complete manifold of dimension $n$ and
the curvature-dimension condition $CD\left(n,-K\right)$ hold for
some constant $K\geq0$. Let $u$ be a positive solution to the porous
medium equation \eqref{eq:poro-e1} in $\left(0,\infty\right)\times M$.
Let $N>0$ be defined by $\frac{2}{N}=\frac{2}{n}+m-1$.

1) If $m>\max\left\{ \frac{1}{2},1-\frac{2}{n}\right\} $ and $m<1$,
then 
\begin{equation}
m\frac{\left|\nabla u\right|^{2}}{u^{3-m}}-\frac{u_{t}}{u}\leq\frac{N}{2t}+\frac{2Km}{\left(1-m\right)\left(2m-1\right)}u^{m-1}\text{.}\label{eq-d1}
\end{equation}

2) If $K=0$, $m>1-\frac{2}{n}$, and if $c\in\left[0,\infty\right]$
such that $-\frac{m}{m-1}Lu^{m-1}\left(0,\cdot\right)\leq c$, then
\begin{equation}
m\frac{\left|\nabla u\right|^{2}}{u^{3-m}}-\frac{u_{t}}{u}\leq\frac{c}{\frac{N}{2t}c+1}\text{.}\label{eqd-2}
\end{equation}

\end{thm}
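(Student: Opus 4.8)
The plan is to follow the classical Aronson--B\'enilan/Li--Yau strategy adapted to the curvature-dimension setting. Set $v = \frac{m}{m-1}u^{m-1}$ (so that $v<0$ when $m<1$) and introduce the quantity
\[
F = \frac{|\nabla v|^2}{?} - \ldots,
\]
or more conveniently work directly with $P = m\frac{|\nabla u|^2}{u^{3-m}} - \frac{u_t}{u}$, which is (up to an explicit reparametrization) the "pressure gradient" quantity appearing in \eqref{epc5}. A direct computation converts \eqref{eq:poro-e1} into a nonlinear parabolic equation for $v$ of the form $v_t = (m-1)v\,Lv + |\nabla v|^2$, and the target inequality \eqref{eq-d1} is equivalent to a bound of the shape $-\,(m-1)Lv \le \frac{N}{2t} + (\text{curvature term})$. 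Thus the first step is to write down cleanly the evolution equation satisfied by the auxiliary function $w := -(m-1)Lv$ (equivalently $w = -\frac{m}{m-1}Lu^{m-1}$), differentiating the PDE once and applying the operator $L$.

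The second and central step is a Bochner-type computation. Applying the Bochner formula to $v$ and invoking the curvature-dimension condition $CD(n,-K)$ — which gives $\Gamma_2(v) \ge \frac{1}{n}(Lv)^2 - K|\nabla v|^2$ in the Bakry--\'Emery sense — one derives a differential inequality of the form
\[
\Big(\partial_t - (m-1)v\,L - 2\langle\nabla v,\nabla\cdot\rangle\Big)\,w \;\ge\; \frac{2}{N}\,w^2 \;-\; (\text{lower-order terms controlled by }K\,u^{m-1}\,w),
\]
where the coefficient $\frac{2}{N} = \frac{2}{n}+m-1$ emerges precisely from combining the dimensional term $\frac1n(Lv)^2$ with the genuinely nonlinear contribution coming from the $|\nabla v|^2$ and $(m-1)vLv$ terms in the $v$-equation. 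This is where the hypothesis $m>\frac12$ will be used: it is needed to keep the cross-terms (those scaling like $u^{m-1}$) absorbable into the quadratic term with the right sign, and it is what produces the factor $\frac{1}{(1-m)(2m-1)}$ in the curvature coefficient of \eqref{eq-d1}. The condition $m>1-\frac2n$ guarantees $N>0$ so that $\frac2N$ is a positive coefficient.

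The third step is the maximum-principle argument. For part 2), with $K=0$, the differential inequality reduces to $(\partial_t - \mathcal{L})w \ge \frac{2}{N}w^2$ for the (possibly $v$-dependent, but that is harmless) operator $\mathcal L$, and one compares $w$ with the spatially-constant supersolution $\phi(t)$ solving the Riccati ODE $\phi' = \frac{2}{N}\phi^2$ with $\phi(0)=c$, namely $\phi(t) = \frac{c}{1 - \frac{2c}{N}t}$ — wait, one instead runs the comparison so as to get the bound valid for all $t$: the correct barrier is $\phi(t)$ with $\frac{1}{\phi(t)} = \frac{1}{c} + \frac{2t}{N}$, i.e. $\phi(t) = \frac{c}{1+\frac{2c}{N}t} = \frac{c}{\frac{N}{2t}c+1}\cdot(\text{check})$ — this is exactly \eqref{eqd-2}, and the hypothesis $-\frac{m}{m-1}Lu^{m-1}(0,\cdot)\le c$ is the needed initial condition $w(0,\cdot)\le c$. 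A standard parabolic comparison (justified on a complete manifold via a cutoff/Calabi argument, or by the stated assumptions if $M$ is compact) then yields $w\le\phi(t)$ everywhere. For part 1), with $K>0$ and no initial hypothesis, one instead seeks a barrier of the form $\phi(t) = \frac{N}{2t} + A\,u^{m-1}$-type; but since $u^{m-1}$ is not constant, the clean way is to bound the curvature term by $\sup$-free manipulation — here one uses that $m<1$ makes the relevant power work in one's favor — or to carry the $u^{m-1}$ term along as an inhomogeneity in the Riccati comparison, choosing $A = \frac{2Km}{(1-m)(2m-1)}$ so that $\frac{2}{N}(\phi - A u^{m-1})^2 \ge \phi' - (\text{error})$ holds pointwise. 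The $\frac{N}{2t}$ term is the universal Aronson--B\'enilan blow-up rate coming from solving $\phi' = \frac2N\phi^2$ without an initial bound (take $c\to\infty$ in the part-2 barrier).

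The main obstacle I anticipate is the sign bookkeeping in the Bochner step: organizing the several terms of order $u^{m-1}w$ (arising from $\nabla(u^{m-1})\cdot\nabla v$, from the curvature lower bound $-K|\nabla v|^2$, and from $L$ hitting the non-constant coefficient $(m-1)v$) and showing that, under $m>\max\{\frac12,1-\frac2n\}$ and $m<1$, they can all be dominated by $\varepsilon w^2 + C_\varepsilon K^2 u^{2(m-1)}$ with constants that, after completing the square in the Riccati comparison, assemble into exactly $\frac{2Km}{(1-m)(2m-1)}$. A secondary technical point is making the maximum principle rigorous on a general complete (non-compact) manifold: one localizes with a cutoff supported in a geodesic ball, picks up Laplacian-of-cutoff terms controlled via $CD(n,-K)$ and Laplacian comparison, and lets the ball exhaust $M$; the $K=0$ case of part 2) is cleanest since the barrier is explicit and time-dependent only.
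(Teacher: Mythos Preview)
Your overall architecture is correct and, for part 2), essentially matches the paper: once $K=0$ reduces the Bochner inequality to $(A-\partial_t)Z \ge \tfrac{2}{N}Z^2$ (note the operator is $A-\partial_t$, not $\partial_t-\mathcal{L}$ as you wrote; you self-correct to the right barrier anyway), the paper applies the maximum principle to $J=(\tfrac{2t}{N}+\tfrac{1}{c})Z$ and reads off $J\le 1$, which is exactly your $\phi(t)=(\tfrac{2t}{N}+\tfrac{1}{c})^{-1}$.

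For part 1) there is a genuine gap. The Young-type absorption you suggest, dominating the curvature cross-terms by $\varepsilon w^2 + C_\varepsilon K^2 u^{2(m-1)}$, does not work: it leaves a $U^2$ term (here $U=mu^{m-1}$), and since $U$ is unbounded as $u\to 0$ when $m<1$, this cannot be controlled without a lower bound on $u$ --- precisely what the theorem avoids. Nor will any completion of the square recover the linear-in-$U$ constant $\tfrac{2Km}{(1-m)(2m-1)}$ from a $U^2$ remainder.

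The paper's mechanism is an \emph{exact cancellation}, not an absorption. One takes the barrier $B=\theta(t)+aU$ and computes, via the separately derived evolution equation
\[
(A-\partial_t)U=(2m-1)(m-1)\,U\,(Y+Z),
\]
that $(A-\partial_t)(aU)$ has \emph{the same structure} as the curvature term $-2KU(Y+Z)$. Choosing $a$ so that $(m-1)(2m-1)a+2K=0$ kills both the $U^2$ and $UY$ contributions identically; no $\varepsilon$ is involved. After this, a residual linear-in-$U$ term remains with sign $-\tfrac{1}{(m-1)(2m-1)}$, which is nonnegative precisely for $\tfrac12<m<1$ --- this is simultaneously the origin of the range restriction and of the constant $\tfrac{2Km}{(1-m)(2m-1)}=aU/u^{m-1}$. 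With the bad terms gone, $\theta=\tfrac{N}{2t}$ closes the argument as in the $K=0$ case. The missing ingredient in your outline is the evolution equation for $U$ and the observation that it mirrors the curvature term; your alternative phrase ``carry the $u^{m-1}$ term along'' is pointing in the right direction but does not yet name this cancellation.
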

If $K\neq0$, and $m>1$, we are unable to derive an estimate with
the following form 
\[
\left|\nabla u\right|^{2}\leq Q\left(t,u_{t},u\right)
\]
and we do not believe it is possible for the reason which will become
clear, which makes striking difference from the linear case. In this
aspect, we therefore seek for best possible estimates for $\left|\nabla u\right|{}^{2}$
in terms of $t,u_{t},u$ and $\sup u$ as well. To state our results,
let us introduce several notations.

Let $N>0$ and $R\geq0$. Define 
\begin{equation}
w\left(t,y\right)=\frac{4t}{N}\sqrt{NR}\sqrt{y+\frac{NR}{4}}\label{eq-f1}
\end{equation}
 and $Q\left(t,y\right)=C\left(t,y\right)+y$, where $C$ is the solution
to the differential equation 
\[
\frac{d}{dt}C+\frac{2}{N}C^{2}-2R\left(C+y\right)=0\text{, \ }C\left(0\right)=\infty\text{.}
\]
 That is 
\begin{equation}
Q\left(t,y\right)=\frac{NR}{2}+y+\sqrt{NR}\sqrt{y+\frac{NR}{4}}\coth\frac{w\left(t,y\right)}{2}\text{.}\label{eq-f3}
\end{equation}

We are now in a position to state our result for the case that $K>0$
and $m>1$.
\begin{thm}
\label{th-b} Let $M$ be a complete manifold $M$ of dimension $n$
and the curvature-dimension condition $CD\left(n,-K\right)$ hold
for some constant $K\geq0$. Let $u$ be a positive solution to the
porous medium equation\eqref{eq:poro-e1} in $\left(0,\infty\right)\times M$.
Let $f=m\left(u^{m-1}-1\right)/\left(m-1\right)$ and $U=\left(m-1\right)f+m$.
Let $R=\sup\left\{ KU\right\} $. Then 
\[
\frac{\left|\nabla f\right|{}^{2}}{U}\leq Q\left(t,\frac{f_{t}}{U}\right)\text{ on }\frac{f_{t}}{U}>-\frac{NR}{4}
\]
 and 
\[
\frac{\left|\nabla f\right|{}^{2}}{U}-\frac{f_{t}}{U}\leq\frac{N}{2t}+\frac{NR}{2}\text{ on }\frac{f_{t}}{U}\leq-\frac{nR}{4}\text{.}
\]
 
\end{thm}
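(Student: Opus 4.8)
\section*{Proof proposal}

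The plan is to work directly with $f$. A short computation turns \eqref{eq:poro-e1} into the quasilinear equation $f_{t}=U\,Lf+|\nabla f|^{2}$, where $U=(m-1)f+m=mu^{m-1}>0$; hence the quantities $g:=-Lf=(|\nabla f|^{2}-f_{t})/U$ and $b:=f_{t}/U=u_{t}/u$ satisfy $g+b=|\nabla f|^{2}/U\ge 0$, and the asserted estimates read exactly $g\le C(t,b)$ on $\{b>-\tfrac{NR}{4}\}$ (equivalently $|\nabla f|^{2}/U\le Q(t,b)$, since $Q=C+y$) and $g\le\tfrac{N}{2t}+\tfrac{NR}{2}$ on $\{b\le-\tfrac{NR}{4}\}$.

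The first step is the sharp differential inequality for $g$. Differentiating $f_{t}=U\,Lf+|\nabla f|^{2}$, using $[\partial_{t},L]=0$, $LU=(m-1)Lf$, and the Bochner inequality $\tfrac12 L|\nabla f|^{2}\ge\tfrac1n(Lf)^{2}+\langle\nabla f,\nabla Lf\rangle-K|\nabla f|^{2}$ supplied by $CD(n,-K)$, one arrives at $Lf_{t}\ge U\,L(Lf)+\big((m-1)+\tfrac2n\big)(Lf)^{2}+2m\langle\nabla f,\nabla Lf\rangle-2K|\nabla f|^{2}$; the identity $(m-1)+\tfrac2n=\tfrac2N$ then yields
\[
\partial_{t}g\le U\,Lg+2m\langle\nabla f,\nabla g\rangle-\tfrac2N g^{2}+2K|\nabla f|^{2}.
\]
A parallel but now exact computation (using $\partial_{t}u_{t}=L(U u_{t})$) gives the companion identity $\partial_{t}b=U\,Lb+2m\langle\nabla f,\nabla b\rangle+(m-1)b^{2}$. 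Since $|\nabla f|^{2}/U=g+b\ge0$ and $0\le KU\le R$, the curvature term is estimated by $2K|\nabla f|^{2}\le2R(g+b)$; on $\{b\le-\tfrac{NR}{4}\}$ this improves to $2K|\nabla f|^{2}\le2Rg-\tfrac12NR^{2}$, turning the zeroth order term into $-\tfrac2N\big(g-\tfrac{NR}{2}\big)^{2}$, which matches $\phi(t):=\tfrac{N}{2t}+\tfrac{NR}{2}$, the solution of $\phi'=-\tfrac2N(\phi-\tfrac{NR}{2})^{2}$, $\phi(0)=\infty$.

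Next comes the comparison with the ODE defining $C$. I would set $\Phi:=g-C(t,b)$ and compute, using $L(C(t,b))=C_{y}Lb+C_{yy}|\nabla b|^{2}$, the evolution of $b$, and the ODE $C_{t}=-\tfrac2N C^{2}+2R(C+b)$, that
\[
\partial_{t}\Phi-U\,L\Phi-2m\langle\nabla f,\nabla\Phi\rangle\le-\Big(\tfrac2N(g+C)-2R\Big)\Phi+U\,C_{yy}|\nabla b|^{2}-(m-1)C_{y}b^{2}.
\]
On $\{\Phi>0\}$ one has $g>C>\tfrac{NR}{2}$, so $\tfrac2N(g+C)-2R>0$; hence if $C(t,\cdot)$ is nondecreasing and concave on $(-\tfrac{NR}{4},\infty)$ the last two terms are $\le0$ and the maximum principle forces $\Phi\le0$. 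I would verify these two properties from the explicit formula \eqref{eq-f3}: writing $C=\tfrac{NR}{2}+p\coth\!\big(\tfrac{2t}{N}p\big)$ with $p=\sqrt{NR(y+\tfrac{NR}{4})}$, monotonicity reduces to $\sinh x\ge x$ and concavity to $\tanh x\ge x-\tfrac13x^{3}$ for $x\ge 0$ --- the latter since $\big(\tanh x-x+\tfrac13x^{3}\big)'=x^{2}-\tanh^{2}x\ge 0$. The second estimate then follows from the same maximum principle with the barrier $\phi(t)$ in place of $C(t,b)$ and the improved zeroth order term, the boundary value on $\{b=-\tfrac{NR}{4}\}$ being supplied by the first estimate (the two barriers agree there, as $C(t,-\tfrac{NR}{4})=\phi(t)$).

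The main obstacle is to make this comparison rigorous, and three standard points must be dealt with. First, the singularity $C(0,\cdot)=+\infty$: I would first compare with the ODE solution started from a finite value $1/\delta$ (or shift time, $u\mapsto u(\cdot+\tau,\cdot)$) and let $\delta\to0$. Second, noncompactness of $M$: one inserts a Li--Yau cut-off $\eta$ supported in a geodesic ball, controlling $|\nabla\eta|^{2}/\eta$ and $L\eta$ by $CR^{-2}$ via completeness and $CD(n,-K)$ (Laplacian comparison), applies the maximum principle to $\eta\Phi$, and lets the radius tend to infinity. Third, and most delicate bookkeeping-wise, is the interface $\{b=-\tfrac{NR}{4}\}$: the combined barrier $C\big(t,\max(b,-\tfrac{NR}{4})\big)$ has a corner there with an upward jump of its $y$-derivative, so the two regimes must be handled together with a mild regularization near the interface; the conceptual content, however, sits entirely in the sharp inequality of the second paragraph together with the monotonicity and concavity of $C$.
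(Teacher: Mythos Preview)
Your approach is correct in outline and genuinely different from the paper's. You compare $Z=-Lf$ directly with the nonlinear barrier $C(t,Y)$, which forces you to control the Hessian term $U\,C_{yy}|\nabla Y|^{2}$ and the reaction term $-(m-1)C_{y}Y^{2}$; this is why you must verify that $C(t,\cdot)$ is nondecreasing and concave on $(-\tfrac{NR}{4},\infty)$, and why the moving interface $\{Y=-\tfrac{NR}{4}\}$ becomes an honest boundary that has to be handled by matching the two barriers and regularizing. The paper avoids both of these complications by a linearization trick: it fixes a parameter $y\ge-\tfrac{NR}{4}$ and uses the \emph{linear} barrier $B(t,Y)=\theta(t)+b(t)Y$ with $\theta,b$ chosen so that $\theta+by=C(t,y)$ and $b=\partial_{y}C(t,y)$, i.e.\ the tangent line to $C(t,\cdot)$ at $y$. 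Because $B$ is affine in $Y$, the Hessian term vanishes identically and the test function $G=t(Z-B)$ is globally defined; the only sign needed is $b=\partial_{y}C\ge0$ (your monotonicity), and the maximum principle gives $Z\le C(t,y)+\partial_{y}C(t,y)(Y-y)$ for every $y$. Specializing $y=Y$ at points where $Y>-\tfrac{NR}{4}$ yields the first estimate, and letting $y\downarrow-\tfrac{NR}{4}$ yields the second, with no interface analysis at all. In short: your direct route is conceptually transparent but pays for it with an extra concavity computation and a delicate boundary argument; the paper trades a one-parameter family of estimates for a barrier that is linear in $Y$, which makes the maximum-principle step and the domain issues trivial. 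Either way the restriction $m>1$ enters through the sign of $(m-1)C_{y}$ (your version) or $(m-1)b$ (the paper's).
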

Note that if $m=1$ then $U=1$, so $R$ reduces in the linear case
to $K$. If $K=0$ then $R=0$, thus our estimate reduces to Aronson-B{\'e}nilan\textquoteright{}s
estimate.

\section{Notations and geometric assumptions}

Let us begin with a description of our precise work setting. Let $M$
be a complete manifold of dimension $n$, and the complete metric
$ds^{2}=\sum_{i,j=1}^{d}g_{ij}dx^{i}dx^{j}$ in a local coordinate
system. Thus $\left(g_{ij}\right)$ is a symmetric, positive definite
type $\left(0,2\right)$ smooth tensor field on $M$. Let $\Delta$
denote the Laplace-Beltrami operator on $M$ given in a local coordinate
system by 
\begin{equation}
\Delta=\frac{1}{\sqrt{g}}\sum_{i,j}\frac{\partial}{\partial x^{i}}g^{ij}\sqrt{g}\frac{\partial}{\partial x^{j}}\label{04-a1}
\end{equation}
 which is an elliptic differential operator of second order, where
$\left(g^{ij}\right)$ is the inverse matrix of $\left(g_{ij}\right)$,
and $g=\det\left(g_{ij}\right)$. Let $V$ be a smooth vector field,
and let $L=\Delta+V$.

The geometric condition we are going to use, which is actually the
only place where the geometry of the complete manifold $\left(M,g_{ij}\right)$
enters into our study, is the curvature-dimension condition, a concept
that is closely related to Ricci curvature lower bound. 

The metric $\left(g_{ij}\right)$ may be recovered from the elliptic
operator by means of 
\begin{eqnarray*}
\Gamma\left(u,v\right) & = & \frac{1}{2}\left(L\left(uv\right)-uLv-vLu\right)\\
 & = & \sum_{i,j=1}^{d}g^{ij}\frac{\partial u}{\partial x^{i}}\frac{\partial v}{\partial x^{j}}=\left\langle \nabla u,\nabla v\right\rangle \text{,}
\end{eqnarray*}
 then the curvature operator $\Gamma_{2}\left(u,v\right)$ is defined
by iterating $\Gamma$ and given by 
\begin{equation}
\Gamma_{2}\left(u,v\right)=\frac{1}{2}\left(L\Gamma\left(u,v\right)-\Gamma\left(u,Lv\right)-\Gamma\left(v,Lu\right)\right)\text{.}\label{04-b1}
\end{equation}
 The Bochner identity then gives the explicit formula in terms of
the Ricci curvature tensor $\left(R_{ij}\right)$ associated with
the Riemannian metric $\left(g_{ij}\right)$ and the symmetric part
of the total co-variant derivative $\left(V_{ij}\right)$ of the vector
field $\nabla V$, in a local coordinate system, 
\begin{equation}
\Gamma_{2}\left(u,u\right)=\left|\nabla^{2}u\right|^{2}+\sum_{i,j=1}^{d}\left(R_{ij}-V_{ij}\right)\nabla^{i}f\nabla^{j}f\label{04-b2}
\end{equation}
 where $V_{ij}=\frac{1}{2}\left(\nabla_{i}V_{j}+\nabla_{j}V_{i}\right)$,
and $V=\sum_{i=1}^{d}V^{i}\frac{\partial}{\partial x^{i}}$ is the
vector field represented in terms of coordinate partial differentiation.
It is elementary that 
\begin{equation}
\left|\nabla^{2}u\right|^{2}\geq\frac{1}{n}\left(\Delta u\right)^{2}\text{.}\label{04-b3}
\end{equation}
 Therefore, in the case that $V=0$, we have the following curvature-dimension
inequality 
\begin{equation}
\Gamma_{2}\left(u,u\right)\geq\frac{1}{n}\left(\Delta u\right)^{2}-K\left|\nabla u\right|^{2}\text{.}\label{04-b4}
\end{equation}
 Hence, for a general vector field $V$, the geometric condition on
$L$ should be the following curvature-dimension inequality $CD\left(n,-K\right)$
\begin{equation}
\Gamma_{2}\left(u,u\right)\geq\frac{1}{n}\left(Lu\right)^{2}-K\left|\nabla u\right|^{2}\label{04-b5}
\end{equation}
 for some $n>0$ and a constant $K$.

Under this curvature-dimension condition, we will study the porous
medium equation 
\begin{equation}
u_{t}=Lu^{m}\text{ in }\left[0,\infty\right)\times M,\label{eq:poro}
\end{equation}
where $m>0$ is a real exponent.

Throughout the paper, we will use $t$ to denote the time parameter
and $\partial_{t}$ to denote the partial derivative with respect
to $t$. For simplicity, if no confusion may arise, then $h_{t}$
denotes $\partial_{t}h$.

\section{fundamental differential inequalities}

Given a positive solution $u$ of equation \eqref{eq:poro}, consider
the modified Hopf transformation 
\begin{equation}
f=m\frac{u^{m-1}-1}{m-1}\label{04-a2}
\end{equation}
 which recovers to the logarithm transform in the sense that 
\[
\lim_{m\rightarrow1}m\frac{u^{m-1}-1}{m-1}=\log u\text{.}
\]
 It is clear that $\left(m-1\right)f+m=mu^{m-1}$, which is positive.
This quantity will play an important role below.

The porous medium equation is then transformed into a parabolic equation
for $f$ : 
\begin{equation}
\left[\left(\left(m-1\right)f+m\right)L-\partial_{t}\right]f=-\left|\nabla f\right|^{2}\text{.}\label{eq:f-heat-g1}
\end{equation}
 The parabolic equation (\ref{eq:f-heat-g1}) for $f$ has the same
form as the linear case. Although the elliptic operator $A$ involved
is much more complicated than the linear case, it however suggests
the possibility for obtaining gradient estimates that are similar
to Aronson-B{\'e}nilan \cite{MR524760} and Li-Yau \cite{MR834612} type.
In fact our arguments from now on only rely on the fact that $f$
is a solution to (\ref{eq:f-heat-g1}) and the fact (or assumption)
that $\left(m-1\right)f+m$ is positive.

The parabolic equation (\ref{eq:f-heat-g1}) may be written as 
\begin{equation}
Lf=Y-X\label{04-a3}
\end{equation}
 where 
\begin{equation}
X=\frac{\left|\nabla f\right|{}^{2}}{\left(m-1\right)f+m}\text{ \ and \ }Y=\frac{f_{t}}{\left(m-1\right)f+m}.\label{04-a4}
\end{equation}

In general, a gradient estimate will take the form $-Lf\leq c\left(t,f,f_{t}\right)$.
Due to a reason which will become clear only later on we wish to point
out that we will seek for a gradient estimate in terms of $X$ and
$Y$ with the following form 
\begin{equation}
X-Y\leq B\left(t,U,Y\right)\label{eq:gcform1}
\end{equation}
 for some function $B$ which can be either computed explicitly or
can be determined by simple ordinary differential equations (see below
for details), where $U=\left(m-1\right)f+m$. As long as the problem
of determining $B\left(t,U,Y\right)$ has been settled, the idea then
is to apply the maximum principle to test function 
\begin{equation}
G=t\left[X-Y-B\left(t,U,Y\right)\right]\label{eq:gfunc1}
\end{equation}
to conclude that $G\leq0$, which in turn yields the desired estimate
(\ref{eq:gcform1}). To this end we will consider the heat operator
$\left(A-\partial_{t}\right)$ applying to $G$: 
\begin{equation}
\left(A-\partial_{t}\right)G=-\frac{1}{t}G+t\left(A-\partial_{t}\right)\left(X-Y\right)-t\left(A-\partial_{t}\right)B\left(t,U,Y\right)\label{eq:heatforg}
\end{equation}
where $A$ is an elliptic differential operator, yet need to be determined
as well. So our first task is to derive parabolic equations that $X$
and $Y$ must satisfy. 

Let us start with the parabolic equation (\ref{eq:f-heat-g1}) for
$f$, which may be written as 
\begin{equation}
f_{t}=\left(\left(m-1\right)f+m\right)\left(Lf+X\right)\text{ .}\label{ft-a-1}
\end{equation}

To derive the evolution equations for $Y$ and $X$, first differentiate
the equation (\ref{eq:f-heat-g1}) in $t$ to obtain 
\[
\partial_{t}f_{t}=\left(\left(m-1\right)f+m\right)Lf_{t}+2\left(\left(m-1\right)f+m\right)\left\langle \nabla f,\nabla f_{t}\right\rangle \text{.}
\]
 By using product rule we can work out the time derivative of $Y$
as 
\[
\partial_{t}Y=\left(\left(m-1\right)f+m\right)^{-1}\partial_{t}f_{t}-\left(m-1\right)\left(\left(m-1\right)f+m\right)^{-2}\left(f_{t}\right)^{2}.
\]
 Together with the equation for $\partial_{t}f_{t}$ we thus deduce
that 
\begin{eqnarray}
\partial_{t}Y & = & Lf_{t}+2\left(\left(m-1\right)f+m\right)^{-1}\left\langle \nabla f,\nabla f_{t}\right\rangle \nonumber \\
 &  & +\left(m-1\right)\left(\left(m-1\right)f+m\right)^{-1}f_{t}Lf\nonumber \\
 &  & -\left(m-1\right)\left(\left(m-1\right)f+m\right)^{-2}\left(f_{t}\right)^{2}\text{.}\label{eq:y-14-01}
\end{eqnarray}
 Substituting $f_{t}$ in terms of $Y$ and applying chain rule accordingly
yield 
\begin{eqnarray}
Lf_{t} & = & \left(\left(m-1\right)f+m\right)LY+2\left(m-1\right)\left\langle \nabla f,\nabla Y\right\rangle +\left(m-1\right)YLf\nonumber \\
 & = & \left(\left(m-1\right)f+m\right)LY+2\left(m-1\right)\left\langle \nabla f,\nabla Y\right\rangle \nonumber \\
 &  & +\left(m-1\right)Y\left(Y-X\right).\label{eq:ft-14-01}
\end{eqnarray}
 Similarly, we have 
\begin{equation}
\nabla f_{t}=\left(\left(m-1\right)f+m\right)\nabla Y+\left(m-1\right)Y\nabla f\text{ .}\label{eq:ft-14-02}
\end{equation}
 By plugging these equations into (\ref{eq:y-14-01}) we obtain 
\[
\partial_{t}Y=\left(\left(m-1\right)f+m\right)LY+2m\left\langle \nabla f,\nabla Y\right\rangle +\left(m-1\right)Y^{2}\text{.}
\]
 Define an elliptic differential operator of second order as 
\begin{equation}
A=\left(\left(m-1\right)f+m\right)L+2m\nabla f.\nabla,\label{eq:L-eq-a2}
\end{equation}
then the evolution equation for $Y$ can be neatly expressed as 
\begin{equation}
\left(A-\partial_{t}\right)Y=-\left(m-1\right)Y^{2}.\label{Y-A-2}
\end{equation}
 Therefore, (\ref{eq:L-eq-a2}) is the right choice of $A$ we have
been looking for. 

Next, let us consider $(A-\partial_{t})X$. Again we begin with the
time derivative of $X$: 
\[
\partial_{t}X=2\left(\left(m-1\right)f+m\right)^{-1}\nabla f.\nabla f_{t}-\left(m-1\right)YX\text{.}
\]
 By using (\ref{eq:f-heat-g1}) we obtain 
\begin{equation}
\partial_{t}X=2\langle\nabla f,\nabla X\rangle+2\left\langle \nabla f,\nabla Lf\right\rangle +m\left(m-1\right)XY\label{eq:tX-14-01}
\end{equation}
 and 
\begin{eqnarray*}
LX & = & L\left[\left(\left(m-1\right)f+m\right)^{-1}|\nabla f|^{2}\right]\\
 & = & -2\left(m-1\right)\left(\left(m-1\right)f+m\right)^{-1}\langle\nabla f,\nabla X\rangle\\
 &  & +\left(\left(m-1\right)f+m\right)^{-1}L\left|\nabla f\right|^{2}\\
 &  & -\left(m-1\right)\left(\left(m-1\right)f+m\right)^{-1}X\left(Y-X\right)
\end{eqnarray*}
 so that 
\[
AX=2\langle\nabla f,\nabla X\rangle+L\left|\nabla f\right|^{2}-\left(m-1\right)X\left(Y-X\right)\text{.}
\]
 Therefore 
\begin{equation}
\left(A-\partial_{t}\right)X=2\Gamma_{2}\left(f,f\right)+\left(m-1\right)\left(X-Y\right)^{2}-\left(m-1\right)Y^{2}\text{.}\label{X-A-2}
\end{equation}

It is also convenient to consider $U=\left(m-1\right)f+m$ as a basic
function when we seek for gradient estimates. In the case for linear
equations, $U$ reduces to a constant which explains why the solution
does not appear in Li-Yau type gradient estimates. It follows from
(\ref{eq:f-heat-g1}) the evolution equation for $U$ which is given
by 
\begin{equation}
\left(A-\partial_{t}\right)U=\left(2m-1\right)m(m-1UX\text{.}\label{eq:f-eq}
\end{equation}
The two functions related to the space and time derivatives of $f$,
$X$ and $Y$, may be rewritten as 
\begin{equation}
X=\frac{\left|\nabla f\right|^{2}}{U}\text{ and }Y=\frac{f_{t}}{U}.\label{eq:def-X-Y}
\end{equation}
 According to their evolution equations (\ref{Y-A-2}) and (\ref{X-A-2}),
it is a good idea to introduce a new variable $Z=X-Y=-Lf$, whose
evolution equation can be obtained by taking the difference of (\ref{Y-A-2})
and (\ref{X-A-2}) as follows 
\begin{equation}
\left(A-\partial_{t}\right)Z=2\Gamma_{2}\left(f,f\right)+\left(m-1\right)Z^{2}\text{.}\label{Z-eq1}
\end{equation}

Suppose that the following curvature-dimension condition $CD(n,-K)$
holds 
\begin{eqnarray*}
\Gamma_{2}\left(f,f\right) & \geq & \frac{1}{n}\left(Lf\right){}^{2}-K\left|\nabla f\right|^{2}\\
 & = & \frac{1}{n}Z^{2}-KU\left(Z+Y\right)^{2}\text{.}
\end{eqnarray*}
 It follows therefore from (\ref{Z-eq1}) that 
\begin{equation}
\left(A-\partial_{t}\right)Z\geq\frac{2}{N}Z^{2}-2KU\left(Z+Y\right)^{2}\label{Z-eq2}
\end{equation}
 where 
\[
\frac{2}{N}=\frac{2}{n}+m-1\text{.}
\]
 (\ref{Z-eq2}) is a fundamental inequality, which, together with
the evolution equations for $U$ and $Y$, will lead to different
kinds of gradient estimates in the sequel.

\section{Local gradient estimates}

It seems a long and uninitiated procedure to derive various differential
inequalities, and up to now we have not derived any useful estimates.
From now on however we are going to deliver useful estimates for $u$
(in terms of its Hopf transformation $f$), based solely on the following
differential inequalities 
\begin{equation}
\left\{ \begin{array}{ccc}
(A-\partial_{t})U & = & (2m-1)(m-1)U(Y+Z)\text{,}\\
(A-\partial_{t})Y & = & -(m-1)Y^{2}\text{, \ \ \ \ \ \ \ \ \ \ \ }\\
(A-\partial_{t})Z & \geq & \frac{2}{N}Z^{2}-2KU(Y+Z)\text{ \ \ \ \ \ }
\end{array}\right.\label{3A-e1}
\end{equation}
 where $Z=X-Y$, $U=(m-1)f+m$. We remark the facts that $U\geq0$
and $Z+Y\geq0$.
\begin{proof}[Proof of \eqref{eqd-2} in Theorem \ref{th-a}]

When $K=0$, the third differential inequality in (\ref{3A-e1}) reduces
to 
\begin{equation}
(A-\partial_{t})Z\geq\frac{2}{N}Z^{2}.\label{eq:20140227-1}
\end{equation}
Assume at time $0$
\[
Z=X-Y\leq c.
\]
We apply maximum principle to 
\[
J=\left(\frac{2t}{N}+\frac{1}{c}\right)Z.
\]
On the one hand, by \eqref{eq:20140227-1} we have 
\begin{eqnarray*}
\left(A-\partial_{t}\right)J & = & \left(\frac{2t}{N}+\frac{1}{c}\right)\left(A-\partial_{t}\right)Z-\frac{2}{N}Z\\
 & \geq & \left(\frac{2t}{N}+\frac{1}{c}\right)\frac{2}{N}Z^{2}-\frac{2}{N}Z\\
 & = & \frac{2}{N}Z\left(J-1\right),
\end{eqnarray*}
which ensures that the maximum of $J$ over $\left(0,\infty\right)\times M$,
if exists, can not be greater than $1$. On the other hand, at time
$0$, since $Z\leq C$, we have $J\leq1$. Therefore, we conclude
that 
\[
X-Y\leq\left(\frac{2t}{N}+\frac{1}{c}\right)^{-1},
\]
which is \eqref{eqd-2} in Theorem \ref{th-a}.
\end{proof}
In order to derive (\ref{eq-d1}) in Theorem \ref{th-a} and Theorem
\ref{th-b}, we need several facts, which we are going to establish
now. 

When $K>0$, the differential inequality for $Z$ involves $Y$ and
$U$. Hence we look for gradient estimates in the form 
\[
Z\leq B\left(t,U,Y\right),
\]
 and we wish to prove this estimate by applying the maximum principle
to the test function $G=t\left(Z-B(t,U,Y)\right)$. The idea is to
apply the heat operator $(A-\partial_{t})$ to $G$. Here we recall
that 
\[
A=UL+2m\nabla f.\nabla
\]
 is an elliptic operator of second order. 
\begin{lem}
\label{lem-1} Assume that $B\left(t,U,Y\right)$ is smooth in $\left(t,U,Y\right)$
and is convex in $\left(U,Y\right)$. Let $G=t\left(Z-B\left(t,U,Y\right)\right)$.
Then it holds that 
\begin{eqnarray}
\left(A-\partial_{t}\right)G & \geq & \frac{2}{Nt}G^{2}+\left\{ \frac{4}{N}B-\frac{1}{t}-\left(2K+(m-1)\left(2m-1\right)B_{U}\right)U\right\} G\nonumber \\
 &  & +\left\{ B_{t}+\frac{2}{N}B^{2}-\left(2K+(m-1)\left(2m-1\right)B_{U}\right)U(Y+B)\right\} t\nonumber \\
 &  & +(m-1)tY^{2}B_{Y}\text{.}\label{eq:20140228-1}
\end{eqnarray}
\end{lem}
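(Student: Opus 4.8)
The plan is to compute $(A-\partial_t)G$ directly from the product rule and then substitute the three fundamental evolution equations in \eqref{3A-e1}. Write $G = t(Z - B)$ with $B = B(t,U,Y)$. First I would expand
\[
(A-\partial_t)G = -\tfrac1t G + t\,(A-\partial_t)Z - t\,(A-\partial_t)B,
\]
using that $A$ is a second-order operator with no zeroth-order term and $\partial_t(tZ) = Z + tZ_t$, so the $Z$-term produces the $-\tfrac1t G = -(Z-B)$ contribution after recombining. For the first genuine term I substitute the third line of \eqref{3A-e1}, $(A-\partial_t)Z \ge \tfrac{2}{N}Z^2 - 2KU(Y+Z)$, and everywhere replace $Z = B + \tfrac1t G$.

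The main work is the chain-rule expansion of $(A-\partial_t)B$. Since $B$ depends on $t$ only through its explicit slot and through $U,Y$, I would write $(A-\partial_t)B = B_U\,AU + B_Y\,AY + (\text{Hessian terms}) - B_t - B_U U_t - B_Y Y_t$. Because $A = UL + 2m\nabla f.\nabla$ is a pure second-order operator, the diffusion operator acting on a composite function $B(U,Y)$ obeys
\[
A\,B(U,Y) = B_U\,AU + B_Y\,AY + B_{UU}\,\Gamma_A(U,U) + 2B_{UY}\,\Gamma_A(U,Y) + B_{YY}\,\Gamma_A(Y,Y),
\]
where $\Gamma_A$ is the carré-du-champ associated with $A$, namely $\Gamma_A(\cdot,\cdot) = U\,\Gamma(\cdot,\cdot) \ge 0$. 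This is exactly where convexity of $B$ in $(U,Y)$ is used: the Hessian quadratic form $B_{UU}\Gamma_A(U,U) + 2B_{UY}\Gamma_A(U,Y) + B_{YY}\Gamma_A(Y,Y)$ is then nonnegative (the Hessian of $B$ is positive semidefinite and $\Gamma_A$ is a Gram matrix), so it may be dropped while preserving the inequality $\ge$. Combining $AU - U_t$ and $AY - Y_t$ with the first two lines of \eqref{3A-e1} gives $(A-\partial_t)U = (2m-1)(m-1)U(Y+Z)$ and $(A-\partial_t)Y = -(m-1)Y^2$; substituting these and $-B_t$ yields
\[
(A-\partial_t)B \le B_U(2m-1)(m-1)U(Y+Z) - (m-1)Y^2 B_Y - B_t.
\]

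Assembling everything: $(A-\partial_t)G \ge -(Z-B) + t[\tfrac{2}{N}Z^2 - 2KU(Y+Z)] - t[B_U(2m-1)(m-1)U(Y+Z) - (m-1)Y^2 B_Y - B_t]$. Now I substitute $Z = B + G/t$ throughout and expand: $tZ^2 = tB^2 + 2BG + G^2/t$, and the terms $-2KU(Y+Z)$ and $-(2m-1)(m-1)B_U U(Y+Z)$ each split into a piece with $(Y+B)$ and a piece $G/t$ times the coefficient, the latter contributing to the coefficient of $G$. Grouping by powers of $G$ produces exactly the three brackets in \eqref{eq:20140228-1}: the $G^2$ coefficient is $\tfrac{2}{Nt}$ from $\tfrac1t \cdot \tfrac{2}{N}\cdot(G^2/t)\cdot t$; the linear-in-$G$ coefficient collects $\tfrac4N B$ (from $2\cdot\tfrac2N B$), $-\tfrac1t$ (from $-(Z-B)$ giving $-G/t$), and $-(2K + (m-1)(2m-1)B_U)U$; the $G$-free bracket collects $B_t + \tfrac2N B^2 - (2K+(m-1)(2m-1)B_U)U(Y+B)$ times $t$; and the leftover $(m-1)tY^2 B_Y$ stands alone. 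I expect the only real obstacle to be bookkeeping: tracking signs carefully when replacing $Z$ by $B+G/t$ in the cross terms and verifying that the $Y$-linear contributions from $-2KU(Y+Z)$ and from the $B_U$ term land in the correct brackets rather than leaking into the $G$-coefficient. No estimate beyond $CD(n,-K)$, convexity of $B$, and $\Gamma_A \ge 0$ is needed.
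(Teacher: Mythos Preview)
Your approach mirrors the paper's proof exactly: expand $(A-\partial_t)G$ via the product rule, apply the chain rule to $(A-\partial_t)B$, drop the Hessian block, invoke the evolution equations \eqref{3A-e1}, substitute $Z = B + G/t$, and regroup by powers of $G$. The bookkeeping you outline is correct and matches the paper's line by line.

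There is, however, a sign slip in the convexity step, which you inherit from the lemma's hypothesis. You correctly note that for \emph{convex} $B$ the Hessian quadratic form $B_{UU}\Gamma_A(U,U)+2B_{UY}\Gamma_A(U,Y)+B_{YY}\Gamma_A(Y,Y)$ is nonnegative. But this term enters $(A-\partial_t)B$ with a positive sign (since $\Gamma_A = U\,\Gamma$ with $U>0$), so dropping it yields $(A-\partial_t)B \ge -B_t + B_U(A-\partial_t)U + B_Y(A-\partial_t)Y$, the opposite of the inequality you then write. What is actually needed is an \emph{upper} bound on $(A-\partial_t)B$ in order to get a lower bound on $(A-\partial_t)G = -G/t + t(A-\partial_t)Z - t(A-\partial_t)B$; that requires the Hessian block to be nonpositive, i.e.\ $B$ \emph{concave} in $(U,Y)$. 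The paper's statement has the same typo---its proof asserts the form is $\le 0$, which is concavity, not convexity. This does not affect the applications later in the paper, where $B$ is taken linear in $(U,Y)$, but your sentence ``so it may be dropped while preserving the inequality $\ge$'' followed by ``$(A-\partial_t)B \le \cdots$'' is internally inconsistent as written.
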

\begin{proof}
Define $F=Z-B\left(t,U,Y\right)$. By product rule 
\begin{eqnarray*}
\left(A-\partial_{t}\right)F & = & \left(A-\partial_{t}\right)Z-\left(A-\partial_{t}\right)B(t,U,Y)\\
 & = & \left(A-\partial_{t}\right)Z+B_{t}-B_{Y}\left(A-\partial_{t}\right)Y-B_{U}\left(A-\partial_{t}\right)U\\
 &  & -U\left\{ B_{Y^{2}}|\nabla Y|^{2}+2B_{YU}\langle\nabla U,\nabla Y\rangle+B_{U^{2}}|\nabla U|^{2}\right\} \text{.}
\end{eqnarray*}
 For smooth function $B(t,U,Y)$ which is convex in $(U,Y)$, 
\[
B_{Y^{2}}|\nabla Y|^{2}+2B_{YU}\langle\nabla U,\nabla Y\rangle+B_{U^{2}}|\nabla U|^{2}\leq0\text{.}
\]
 Hence we have 
\begin{equation}
\left(A-\partial_{t}\right)F\geq\left(A-\partial_{t}\right)Z+B_{t}-B_{Y}\left(A-\partial_{t}\right)Y-B_{U}\left(A-\partial_{t}\right)U\label{04-02-c}
\end{equation}
 which yields that 
\begin{eqnarray}
\left(A-\partial_{t}\right)G & = & -\frac{1}{t}G+t\left(A-\partial_{t}\right)F\nonumber \\
 & \geq & -\frac{1}{t}G+t\left(A-\partial_{t}\right)Z+tB_{t}\nonumber \\
 &  & -tB_{Y}\left(A-\partial_{t}\right)Y-tB_{U}\left(A-\partial_{t}\right)U.\label{04-02-d}
\end{eqnarray}
 By applying the fundamental differential inequalities (\ref{3A-e1})
we obtain 
\begin{eqnarray}
\left(A-\partial_{t}\right)G & \geq & -\frac{1}{t}G+t\left\{ \frac{2}{N}Z^{2}-2KU(Y+Z)\right\} +tB_{t}\nonumber \\
 &  & +(m-1)t\left\{ Y^{2}B_{Y}-(2m-1)U(Y+Z)B_{U}\right\} \text{.}\label{04-02-f}
\end{eqnarray}
 Now replace $Z$ by $\frac{1}{t}G+B=Z$ to obtain 
\begin{eqnarray*}
\left(A-\partial_{t}\right)G & \geq & -\frac{1}{t}G+t\left\{ \frac{2}{N}\left[\frac{1}{t}G+B\right]^{2}-2KU(Y+\frac{1}{t}G+B)\right\} \\
 &  & +tB_{t}+(m-1)t\left\{ Y^{2}B_{Y}-(2m-1)UB_{U}(Y+\frac{1}{t}G+B)\right\} \text{.}
\end{eqnarray*}
 By rearranging the terms we deduce that 
\begin{eqnarray*}
\left(A-\partial_{t}\right)G & \geq & \frac{2}{Nt}G^{2}+\left\{ \frac{4}{N}B-\frac{1}{t}-\left(2K+(m-1)\left(2m-1\right)B_{U}\right)U\right\} G\\
 &  & +\left\{ B_{t}+\frac{2}{N}B^{2}-\left(2K+(m-1)\left(2m-1\right)B_{U}\right)U(Y+B)\right\} t\\
 &  & +(m-1)tY^{2}B_{Y}\text{.}
\end{eqnarray*}

\end{proof}
With this result at hand, we are able to test whether a function $B\left(t,U,Y\right)$
convex in $\left(U,Y\right)$ is an upper bound on $Z$.

Let us consider the linear form 
\[
B(t,U,Y)=\theta+aU+bY
\]
 where $\theta$, $a$ and $b$ are functions in $t$. Then by the
lemma above, 
\begin{eqnarray}
\left(A-\partial_{t}\right)G & \geq & \frac{2}{Nt}G^{2}+\left(\frac{4}{N}\theta-\frac{1}{t}\right)G\nonumber \\
 &  & +2\left(\frac{2b}{N}Y-\lambda_{1}U\right)G+E_{1}t\label{04-3a}
\end{eqnarray}
 where
\begin{eqnarray*}
E_{1} & = & B_{t}+\frac{2}{N}B^{2}-\lambda_{2}((1+b)UY+aU^{2}+\theta U)+(m-1)bY^{2}\\
 & = & B_{t}+\frac{2}{N}\theta^{2}+\left(\frac{4a}{N}-\lambda_{2}\right)\theta U+\left(\frac{2}{N}a-\lambda_{2}\right)aU^{2}\\
 &  & +\frac{4b}{N}\theta Y+\left(\frac{2}{N}b^{2}+(m-1)b\right)Y^{2}+\left(\frac{4}{N}ab-\lambda_{2}(1+b)\right)UY,
\end{eqnarray*}
and 
\begin{eqnarray*}
\lambda_{1} & = & \left(K+\left[\frac{(m-1)\left(2m-3\right)}{2}-\frac{2}{n}\right]a\right)\text{,}\\
\lambda_{2} & = & \left(2K+(m-1)\left(2m-1\right)a\right).
\end{eqnarray*}
Introduce a function $\psi$ and rearrange the differential inequality
(\ref{04-3a}) as 
\begin{eqnarray}
\left(A-\partial_{t}\right)G & \geq & \frac{2}{Nt}\left[G+\frac{Nt}{2}\left(\frac{2b}{N}Y-\lambda_{1}U+\psi\right)\right]^{2}\nonumber \\
 &  & +\left(\frac{4}{N}\theta-\frac{1}{t}-2\psi\right)G+E_{2}t\label{04-3b}
\end{eqnarray}
 where 
\begin{eqnarray*}
E_{2} & = & E_{1}-\frac{N}{2}\left(\frac{2b}{N}Y-\lambda_{1}U+\psi\right)^{2}\\
 & = & E_{1}-\frac{2b^{2}}{N}Y^{2}-\frac{N}{2}\left(\lambda_{1}U-\psi\right)^{2}+2b\left(\lambda_{1}U-\psi\right)Y\\
 & = & B_{t}+\frac{2}{N}\theta^{2}+\left(\frac{4a}{N}-\lambda_{2}\right)\theta U+\left(\frac{2}{N}a^{2}-\lambda_{2}a-\frac{N}{2}\lambda_{1}^{2}\right)U^{2}\\
 &  & +\frac{4b}{N}\theta Y+(m-1)bY^{2}+\left(\frac{4}{N}ab-\lambda_{2}(1+b)+2\lambda_{1}b\right)UY\\
 &  & +N\lambda_{1}U\psi-2b\psi Y-\frac{N}{2}\psi^{2}\text{.}
\end{eqnarray*}
 Insert the values of $\lambda_{1}$ and $\lambda_{2}$ to obtain
\begin{eqnarray}
E_{2} & = & B_{t}+\frac{2}{N}\theta^{2}+\frac{4b}{N}\theta Y+(m-1)bY^{2}\nonumber \\
 &  & +\left(\frac{4}{n}a-(m-1)\left(2m-3\right)a-2K\right)\theta U\nonumber \\
 &  & +\left(\frac{2}{N}a^{2}-(m-1)\left(2m-1\right)a^{2}-2Ka-\frac{N}{2}\lambda_{1}^{2}\right)U^{2}\nonumber \\
 &  & -\left((m-1)\left(2m-1\right)a+2K\right)UY\nonumber \\
 &  & +N\lambda_{1}U\psi-2b\psi Y-\frac{N}{2}\psi^{2}\text{.}\label{eq:04-3c}
\end{eqnarray}

Let us consider the coefficient in front of $U^{2}$ in (\ref{eq:04-3c}),
which is 
\begin{eqnarray*}
\lambda_{3} & = & \frac{2}{N}a^{2}-(m-1)\left(2m-1\right)a^{2}-2Ka-\frac{N}{2}\lambda_{1}^{2}\\
 & = & -\left[(m-1)\left(2m-1\right)a+2K\right]^{2}\frac{N}{8}.
\end{eqnarray*}
 Therefore 
\begin{eqnarray}
E_{2} & = & B_{t}+\frac{2}{N}\theta^{2}+\frac{4b}{N}\theta Y+(m-1)bY^{2}\nonumber \\
 &  & +N\left(K+\left[\frac{(m-1)\left(2m-1\right)}{2}-\frac{2}{N}\right]a\right)\psi U-2b\psi Y-\frac{N}{2}\psi^{2}\nonumber \\
 &  & +\left(\frac{4}{n}a-(m-1)\left(2m-3\right)a-2K\right)\theta U\nonumber \\
 &  & -\left[(m-1)\left(2m-1\right)a+2K\right]^{2}\frac{N}{8}U^{2}\nonumber \\
 &  & -\left((m-1)\left(2m-1\right)a+2K\right)UY\text{. }\label{04-c5}
\end{eqnarray}
Inserting 
\[
B_{t}=\theta^{\prime}+a^{\prime}U+b^{\prime}Y
\]
into it yields 
\begin{eqnarray}
E_{2} & = & \theta^{\prime}+\frac{2}{N}\theta^{2}+\left(\frac{4b}{N}\theta+b^{\prime}\right)Y+(m-1)bY^{2}\nonumber \\
 &  & +N\left(K+\left[\frac{(m-1)\left(2m-1\right)}{2}-\frac{2}{N}\right]a\right)\psi U-2b\psi Y-\frac{N}{2}\psi^{2}\nonumber \\
 &  & +\left(\left[\frac{4}{n}a-(m-1)\left(2m-3\right)a-2K\right]\theta+a^{\prime}\right)U\nonumber \\
 &  & -\frac{N}{8}\left[(m-1)\left(2m-1\right)a+2K\right]^{2}U^{2}\nonumber \\
 &  & -\left((m-1)\left(2m-1\right)a+2K\right)UY\text{. }\label{04-6a}
\end{eqnarray}

Assume that $N=2\left(\frac{2}{n}+m-1\right)^{-1}>0$. According to
(\ref{04-3b}), in order to be able to conclude that $G\leq0$ by
using the maximum principle, we need to choose $\psi$, $a$, $b$
and $\theta$ such that 
\begin{equation}
\frac{4}{N}\theta-2\psi-\frac{1}{t}\geq0,\mbox{ and }E_{2}\geq0.\label{eq:20140228-2}
\end{equation}
We are now in a position to prove (\ref{eq-d1}) in Theorem \ref{th-a}.
\begin{proof}[Proof of (\ref{eq-d1}) in Theorem \ref{th-a}]

By the expression (\ref{04-6a}) for $E_{2}$, the best we can do
is to choose $a$ such that 
\[
(m-1)\left(2m-1\right)a+2K=0
\]
 otherwise we can not control $U^{2}$ for whatever the choice of
$b$, as long as $m\neq1$ (if $m=1$ then $U=1$, the story would
be different then). This is possible only if $(m-1)\left(2m-1\right)\neq0$,
so we are working with this constraint. 

With this particular choice of $a$, we have
\begin{eqnarray}
E_{2} & = & \theta^{\prime}+\frac{2}{N}\theta^{2}+\left(b^{\prime}+4b\frac{\theta}{N}\right)Y+(m-1)bY^{2}\nonumber \\
 &  & -\frac{2}{(m-1)\left(2m-1\right)}\left(\frac{4}{N}\theta-2\psi\right)KU-2b\psi Y-\frac{N}{2}\psi^{2}.\label{eq:E-2}
\end{eqnarray}
Recall that we do not wish to use any upper bound on $U$, and the
only assumption we imposed on $U$ is its positivity. Therefore, to
control $E_{2}$, we need 
\begin{equation}
-\frac{2}{(m-1)\left(2m-1\right)}\left(\frac{4}{N}\theta-2\psi\right)\geq0.\label{eq:20140228-3}
\end{equation}
However, the first inequality in condition (\ref{eq:20140228-2})
implies that $\frac{4}{N}\theta-2\psi$ needs to be positive. Hence
to ensure (\ref{eq:20140228-3}) to hold, we need $-\frac{2K}{(m-1)\left(2m-1\right)}\geq0$,
which means $m\in\left(\frac{1}{2},1\right)$. On the other hand,
in order to control the term $\left(m-1\right)bY^{2}$ in the expression
(\ref{eq:E-2}) for $E_{2}$, we need $\left(m-1\right)b\geq0$. Therefore,
we have to choose $b=0$. 

It only remains to determine $\theta.$ By substituting $b=0$ into
(\ref{eq:E-2}) we have 
\[
E_{2}=\theta^{\prime}+\frac{2}{N}\theta^{2}-\frac{N}{2}\psi^{2}-\frac{2}{(m-1)\left(2m-1\right)}\left(\frac{4}{N}\theta-2\psi\right)KU.
\]
So condition (\ref{eq:20140228-2}) will be satisfied when 
\begin{equation}
\frac{4}{N}\theta-2\psi-\frac{1}{t}\geq0,\mbox{ and }\theta^{\prime}+\frac{2}{N}\theta^{2}-\frac{N}{2}\psi^{2}\geq0.\label{eq:20140306-4}
\end{equation}
By testing the coefficients of terms with the lowest degree in $t$,
we conclude that the best choice of $\left(\theta,\psi\right)$ is
$\theta=\frac{N}{2t}$ and $\psi=0$. 
\end{proof}

\section{Non-local estimates}

In this part we prove Theorem \ref{th-b}. We have seen from the last
section that when $K>0$ and $m>1$, there is no way to control the
term containing $KU$ in $E_{2}$ (\ref{eq:E-2}). To deal with this
problem, the idea is to replace $KU$ by its upper bound 
\begin{equation}
R=\sup KU\label{eq:R}
\end{equation}
and then seek for a bound on $Z$ involving constant $R$ rather than
$U$. Therefore, we choose $a=0$, and consider gradient bounds in
the form $B(t,U,Y)=\theta+bY$. By Lemma \ref{lem-1},
\begin{eqnarray*}
\left(A-\partial_{t}\right)G & \geq & \frac{2}{Nt}G^{2}+\left(\frac{4}{N}\theta-\frac{1}{t}\right)G\\
 &  & -2KUt\left(\frac{G}{t}+\theta+\left(1+b\right)Y\right)\\
 &  & +\frac{4b}{N}YG+\frac{2}{N}\left(\theta+bY\right)^{2}t+tB_{t}\\
 &  & +t(m-1)bY^{2}.
\end{eqnarray*}
 Since 
\[
\frac{G}{t}+\theta+(1+b)Y=X\geq0,
\]
we have
\begin{eqnarray*}
\left(A-\partial_{t}\right)G & \geq & \frac{2}{Nt}G^{2}+\left(\frac{4}{N}\theta-\frac{1}{t}\right)G\\
 &  & -2Rt\left(\frac{G}{t}+\theta+\left(1+b\right)Y\right)\\
 &  & +\frac{4b}{N}YG+\frac{2}{N}\left(\theta+bY\right)^{2}t+tB_{t}\\
 &  & +t\left(m-1\right)bY^{2}.
\end{eqnarray*}
Introduce a function $y$ and rearrange the terms as 
\begin{eqnarray*}
\left(A-\partial_{t}\right)G & \geq & \frac{2}{Nt}\left[G+b\left(Y-y\right)t\right]^{2}\\
 &  & +\left(\frac{4}{N}\left(\theta+by\right)-2R-\frac{1}{t}\right)G+E_{3}t
\end{eqnarray*}
 where 
\begin{eqnarray}
E_{3} & = & \left(m-1\right)bY^{2}\nonumber \\
 &  & +\left(\frac{4}{N}b\left(\theta+by\right)-2R\left(1+b\right)+b^{\prime}\right)\left(Y-y\right)\nonumber \\
 &  & +\theta^{\prime}+b^{\prime}y+\frac{2}{N}\left(\theta+by\right)^{2}-2R\left(\theta+by+y\right).\label{eq:E-3}
\end{eqnarray}
From this differential inequality, to conclude $G\leq0$ by maximum
principle, we need 
\begin{equation}
\frac{4}{N}\left(\theta+by\right)-2R-\frac{1}{t}\geq0,\mbox{ and }E_{3}\ge0.\label{eq:con}
\end{equation}

Suppose that $\left(m-1\right)b\geq0$. To ensure that $E_{3}$ is
non-negative, let us solve
\begin{equation}
\left\{ \begin{array}{c}
\theta^{\prime}+b^{\prime}y+\frac{2}{N}\left(\theta+by\right)^{2}-2R\left(\theta+by+y\right)=0\\
\frac{4}{N}b\left(\theta+by\right)-2R\left(1+b\right)+b^{\prime}=0\\
\theta\left(0\right)=\infty\\
b\left(0\right)=0.
\end{array}\right.\label{eq:ODEs}
\end{equation}
Define $C=\theta+by$. It follows from (\ref{eq:ODEs}) that 
\[
\frac{d}{dt}C+\frac{2}{N}C^{2}-2R\left(C+y\right)=0,\ C\left(0\right)=\infty.
\]
Assume that $y$ is a constant such that $y\geq-\frac{NR}{4}$. Then
we have 
\begin{equation}
C=C\left(t,y\right)=\frac{NR}{2}+\sqrt{NR}\sqrt{y+\frac{NR}{4}}\coth\frac{w\left(t,y\right)}{2}\label{eq:C}
\end{equation}
where
\[
w\left(t,y\right)=\frac{4t}{N}\sqrt{NR}\sqrt{y+\frac{NR}{4}}.
\]
By substituting it into the second equation in (\ref{eq:con}), we
can solve $b$ as 
\begin{equation}
b=\partial_{y}C\left(t,y\right)=2tR\frac{e^{2w\left(t,y\right)}-1-2e^{w\left(t,y\right)}w\left(t,y\right)}{\left(e^{w\left(t,y\right)}-1\right)^{2}w\left(t,y\right)}.\label{eq:yC}
\end{equation}
One can verify that $b\geq0$. So we have found $\left(\theta,b,y\right)$
such that $E_{3}\geq0$ when $m>1$. Moreover, $\theta+by=C$ satisfies
the first inequality in (\ref{eq:con}). Therefore, we have obtained
a family of bound on $X-Y$ with index $y\geq-\frac{NR}{4}$ as 
\[
B=\theta+bY=C\left(t,y\right)+\partial_{y}C\left(t,y\right)\left(Y-y\right).
\]
This may be organized as the following lemma.
\begin{lem}
Let $X$, $Y$, $R$, $C$ and $\partial_{y}C$ be defined according
to (\ref{04-a4}), (\ref{eq:R}), (\ref{eq:C}) and (\ref{eq:yC}).
When $m>1$, for any $y\geq-\frac{NR}{4}$, we have 
\begin{equation}
X-Y\leq C\left(t,y\right)+\partial_{y}C\left(t,y\right)\left(Y-y\right).\label{eq:20140306-5}
\end{equation}

\end{lem}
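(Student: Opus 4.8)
The plan is to run the maximum-principle argument of the previous section, but with a test bound that avoids $U$ altogether, since $U$ is unbounded when $m>1$. So I take the affine bound $B(t,U,Y)=\theta(t)+b(t)Y$ (i.e. $a=0$), which is trivially convex in $(U,Y)$, and apply Lemma~\ref{lem-1} to $G=t(Z-B)$. Feeding this $B$ and the fundamental inequalities \eqref{3A-e1} into the conclusion of Lemma~\ref{lem-1}, the only surviving term that still contains $U$ is $-2KU\,t\,X$, where $X=\tfrac{G}{t}+\theta+(1+b)Y=Z+Y$. This is the one genuinely nonlinear step, and the point the discussion above stresses: because $U$ cannot appear in the final bound, one uses $X\ge 0$ (recorded right after \eqref{3A-e1}) together with $KU\le R$ to replace $-2KU\,t\,X$ by the weaker $-2R\,t\,X$, leaving a differential inequality for $G$ that involves only the constant $R$ and $\theta,b,Y$.

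Next I complete the square in $G$: introducing an auxiliary constant $y$ and regrouping, the inequality takes the form $(A-\partial_t)G\ge\tfrac{2}{Nt}[G+b(Y-y)t]^2+(\tfrac{4}{N}(\theta+by)-2R-\tfrac{1}{t})G+E_3\,t$ with $E_3$ as in \eqref{eq:E-3}. For the maximum principle to force $G\le 0$ it then suffices that the coefficient of $G$ be nonnegative and that $E_3\ge 0$; cf. \eqref{eq:con}. I choose $\theta,b$ so as to annihilate every term in $E_3$ except the manifestly nonnegative $(m-1)bY^2$ — nonnegative precisely because $m>1$ and, as will be checked, $b\ge 0$ — which is exactly the coupled ODE system \eqref{eq:ODEs} with $\theta(0^+)=\infty$, $b(0)=0$.

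The heart of the matter is solving \eqref{eq:ODEs}. Writing $C=\theta+by$ and noting $\dot C=\theta'+b'y$ (as $y$ is constant), the first equation of \eqref{eq:ODEs} is exactly the scalar Riccati equation $\tfrac{d}{dt}C+\tfrac{2}{N}C^{2}-2R(C+y)=0$ with $C(0)=\infty$, whose explicit solution for the admissible range $y\ge-\tfrac{NR}{4}$ is \eqref{eq:C}. The key observation is that differentiating this Riccati equation in the parameter $y$ reproduces verbatim the second equation of \eqref{eq:ODEs}, now read as a linear ODE for $b$, with the matching initial value $b(0)=0$ (read off from the closed form); hence $b=\partial_yC(t,y)$, i.e. \eqref{eq:yC}. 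From \eqref{eq:yC} one checks $b\ge 0$, and from $\coth x\ge 1/x$ that $C\ge\tfrac{NR}{2}+\tfrac{N}{2t}$, so the $G$-coefficient $\tfrac{4}{N}C-2R-\tfrac{1}{t}$ is in fact positive; thus along this solution $E_3\equiv(m-1)bY^{2}\ge 0$ and both requirements of \eqref{eq:con} hold.

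With these choices in hand, the parabolic maximum principle gives $G\le 0$ on $(0,\infty)\times M$, i.e. $Z\le B=\theta+bY=C(t,y)+\partial_yC(t,y)(Y-y)$, which is the asserted bound once we write $\theta=C-by$. The main obstacle, apart from keeping the $E_3$ algebra honest, is the justification of the maximum principle on a noncompact complete manifold: strictly one should either assume $\sup G$ is attained or insert a spatial cutoff and let its radius tend to infinity, the blow-up $\theta(0^+)=\infty$ (equivalently $C\ge\tfrac{N}{2t}$) handling the parabolic boundary $\{t=0\}$. The resulting family of bounds, indexed by $y\ge-\tfrac{NR}{4}$, is then what gets optimized over $y$ to produce Theorem~\ref{th-b}.
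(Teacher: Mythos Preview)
Your proposal is correct and follows essentially the same route as the paper: set $a=0$, use $X\ge 0$ and $KU\le R$ to absorb the $U$-term, complete the square with an auxiliary parameter $y$, solve the resulting Riccati for $C=\theta+by$, and identify $b=\partial_y C$. Your extra remarks --- that differentiating the Riccati in $y$ yields the second ODE, the bound $\coth x\ge 1/x$ verifying the $G$-coefficient condition, and the caveat about the noncompact maximum principle --- are all accurate refinements that the paper itself leaves implicit.
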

To derive Theorem \ref{th-b} from this lemma, notice that when $Y\geq-\frac{NR}{4}$,
we can let $y=Y$ in (\ref{eq:20140306-5}), hence obtaining the first
inequality in Theorem \ref{th-b}. As for the second inequality in
Theorem \ref{th-b}, it follows from (\ref{eq:C}) and (\ref{eq:yC})
that
\[
\lim_{y\rightarrow-\frac{NR}{4}}C\left(t,y\right)=\frac{NR}{2}+\frac{N}{2t}\mbox{ and }\lim_{y\rightarrow-\frac{NR}{4}}\partial_{Y}C\left(t,y\right)=\frac{2R}{3}t.
\]
Therefore, by letting $y\rightarrow-\frac{NR}{4}$ in (\ref{eq:20140306-5})
we arrived at the desiring result.

\end{document}